\documentclass[11pt]{amsart}
\usepackage[T1]{fontenc}
\usepackage{lmodern}
\usepackage[margin=1in]{geometry}
\usepackage{amsmath,amssymb,booktabs,array,tikz}
\usepackage[hidelinks]{hyperref}
\allowdisplaybreaks
\newtheorem{theorem}{Theorem}
\newcommand{\norm}[1]{\left\lVert #1\right\rVert_2}
\renewcommand{\d}{\mathop{}\!\mathrm{d}}
\begin{document}
\title{Elementary Proofs of Commonality of the House Graph and \(K_4\) with One Edge Subdivided}
\date{}

\author{Hao Chen}
\thanks{School of Mathematical Sciences, Soochow University,
Suzhou 215006, China. E-mail:
\href{mailto:chenhao@suda.edu.cn}{\texttt{chenhao@suda.edu.cn}}}
\maketitle
\begin{abstract}
A graph \(H\) is called \emph{common} if the number of monochromatic
copies of \(H\) in a \(2\)-edge-coloring of a large complete graph is
asymptotically minimized by the random coloring with an equal proportion
of each color. The house graph and
\(K_4\) with one edge subdivided once are known to be common by the flag algebra method. Grzesik, Lee, Lidick\'y,
and Volec asked for ``human-friendly'' proofs of the commonality of these
two graphs. In this short note, we give flag-algebra-free proofs that
both graphs are common.
\end{abstract}

\section{Introduction}
The Ramsey multiplicity $M(H,n)$ of a graph $H$ is the minimum number of monochromatic labeled copies of $H$ in a 2-edge-coloring of the complete graph $K_n$. Define the Ramsey multiplicity constant $C(H)$ to be $$C(H):=\lim_{n \to +\infty}\frac{M(H,n)}{n^{v(H)}},$$ 
where $v(H)$ is the number of vertices of $H$. The limit exists: for \(n\ge v(H)\), the ratio
\(M(H,n)/(n)_{v(H)}\), where
\((n)_k=n(n-1)\cdots(n-k+1)\), is nondecreasing by averaging over induced complete subgraphs.
The random $2$-edge-coloring implies that $C(H)\leq 2^{1-e(H)}$ for every graph $H$, where $e(H)$ is the number of edges of $H$. If the Ramsey multiplicity constant of a graph $H$ is exactly $2^{1-e(H)}$, then we call $H$ a common graph, or simply $H$ is common. In other words, a graph \(H\) is common if the number of monochromatic
copies of \(H\) in a \(2\)-edge-coloring of a large complete graph is asymptotically minimized by the random coloring with an equal proportion of each color. For example, Goodman's formula~\cite{G59} shows that $K_3$ is common.

In 1962, Erd\H{o}s~\cite{E62} conjectured that every complete graph is common and later Burr and Rosta~\cite{BR80} extended this conjecture to every graph. Both conjectures failed to be true. Sidorenko~\cite{S89} proved that a triangle with a pendent edge is not common and Thomason~\cite{T89} proved that $K_4$ is not common. A general result from Jagger, {\v S}t'ov{\' i}{\v c}ek and Thomason~\cite{JST96} showed that every graph containing $K_4$ as a subgraph is not common. 

The classification of common graphs is a wide-open problem in extremal graph theory. For bipartite graphs, if $H$ satisfies Sidorenko's conjecture~\cite{S93}, then $H$ is common. 
We remark that an OpenAI preprint~\cite{OAI26} recently claimed a counterexample to Sidorenko's conjecture. Whether all bipartite graphs are common remains an open problem. 
For non-bipartite graphs, relatively few families of graphs are known to be common. 
Even for some graphs with few vertices, it is difficult to determine whether a graph is common. 
We direct
interested readers to~\cite{BMN22,BMN2024,GLLV,KKL26,KoL22,KNNVW22,KVW,LN23} for recent progress on this problem. 

The flag algebra method, introduced by Razborov~\cite{R07} in 2007, is a powerful computational framework for proving asymptotic inequalities between subgraph densities. It encodes small subgraph patterns into an algebraic structure and reduces the verification of such inequalities to semidefinite programming problems. This method has led to substantial progress in the study of common graphs with small size. For example, Hatami, Hladký, Král', Norine and Razborov~\cite{HHNRK12} used this method to prove that the \(5\)-wheel \(W_5\) is common. In~\cite{GLLV}, Grzesik, Lee, Lidick\'y and Volec used flag algebras to show that the $7$-wheel as well as all the connected $7$-vertex $K_4$-free non-$3$-colourable graphs are common. In their paper, they mentioned that the following two graphs $H_3$ and $H_4$ on the vertex set $\{1,2,3,4,5\}$ with $E(H_3)=\{12,13,23,14,45,25\}$ and $E(H_4)=\{12,14,24,34,15,25,35\}$ are common. The graph $H_3$ is the house graph, i.e., a pentagon with one chord and the graph $H_4$ is $K_4$ with one edge subdivided once (see Figure 1). Raghuvanshi~\cite{R16} proved commonality of $H_3$ using flag algebras. Grzesik, Lee, Lidicky and Volec~\cite{GLLV} give computer-assisted proofs for both graphs. While flag algebras are extremely powerful, such proofs are often long and do not by themselves explain why a small graph should be common. In~\cite{GLLV}, Grzesik, Lee, Lidick\'y and Volec said ``it would still be interesting to find simpler ‘human-friendly’
proofs of the commonality of $H_3$ or $H_4$”. In this short note, we answer their question by giving flag-algebra-free proofs that both graphs are common.
\begin{figure}[ht]
\centering
\begin{tikzpicture}[scale=.83,
  vertex/.style={circle,fill=black,inner sep=1.8pt}]
  \begin{scope}
    \coordinate (a) at (0,0);
    \coordinate (b) at (2,0);
    \coordinate (c) at (1,1.15);
    \coordinate (d) at (0,-1.4);
    \coordinate (e) at (2,-1.4);
    \draw (a)--(b)--(c)--(a)--(d)--(e)--(b);
    \node[vertex,label=left:$1$] at (a) {};
    \node[vertex,label=right:$2$] at (b) {};
    \node[vertex,label=above:$3$] at (c) {};
    \node[vertex,label=left:$4$] at (d) {};
    \node[vertex,label=right:$5$] at (e) {};
    \node at (1,-2.2) {$H_3$: the house graph};
  \end{scope}
  \begin{scope}[xshift=6cm]
    \coordinate (a) at (0,0);
    \coordinate (b) at (1.3,0);
    \coordinate (c) at (2.8,0);
    \coordinate (d) at (1,1.35);
    \coordinate (e) at (1,-1.35);
    \draw (a)--(b) (a)--(d)--(b)--(e)--(a)
          (d)--(c)--(e);
    \node[vertex,label=left:$1$] at (a) {};
    \node[vertex,label=right:$2$] at (b) {};
    \node[vertex,label=right:$3$] at (c) {};
    \node[vertex,label=above:$4$] at (d) {};
    \node[vertex,label=below:$5$] at (e) {};
    \node at (1.4,-2.2) {$H_4$: $K_4$ with one edge subdivided};
  \end{scope}
\end{tikzpicture}
\caption{The graphs $H_3$ and $H_4$.}
\end{figure}

\section{Preliminaries}
A homomorphism from a graph $H$ to $G$ is a map $f:V(H)\rightarrow V(G)$ such that $f(u)f(v)\in E(G)$ whenever $uv\in E(H)$. Let ${\rm Hom}(H,G)$ be the set of homomorphisms from $H$ to $G$. Define the homomorphism density $t(H,G)$ of $H$ in $G$ by $$t(H,G)=\frac{|{\rm Hom}(H,G)|}{v(G)^{v(H)}}.$$ 

To describe our proofs, it is convenient to work in the language of graph limits. For a comprehensive treatment of the theory of graph limits, we refer to~\cite{LL12}.

A kernel is a measurable function \(U:[0,1]^2\to \mathbb{R}\) such that $U(x,y)=U(y,x)$ for every $(x,y) \in [0,1]^2$. A graphon is a kernel $W:[0,1]^2 \to [0,1]$. 
For a finite graph $H:=(V(H),E(H))$ and a bounded kernel \(U\), the
homomorphism density of $H$ in $U$ is
\[
 t(H,U)=\int_{[0,1]^{v(H)}}
       \prod_{ij\in E(H)}U(x_i,x_j)\prod_{i\in V(H)}\d x_i.
\]
Integrals in this paper use Lebesgue measure on unit intervals. We use \(\norm{\cdot}\) to denote the \(L^2\)-norm: for a function
\(g:[0,1]\to\mathbb R\),
  $\norm{g}
  =\left(\int_0^1 |g(x)|^2\d x\right)^{1/2}$
and for a kernel \(K:[0,1]^2\to\mathbb R\), $\norm{K}
  =\left(\int_0^1\int_0^1 |K(x,y)|^2\d x\d y\right)^{1/2}.$

Fix a graphon $W$ and put
\[
\begin{aligned}
 U&=2W-1,& d(x)&=\int_0^1U(x,y)\d y,\\
 d_U&=\int_0^1 d(x)\d x,& f(x)&=\int_0^1U(x,y)d(y)\d y.
\end{aligned}
\]
Thus $U$ is a kernel with \(|U|\le1\) for every $(x,y)\in [0,1]^2$, \(|d(x)|\le1\) and \(|f(x)|\le1\) for $x\in [0,1]$. We also use the path
kernels
\[
\begin{aligned}
 P_2(x,y)&=\int_0^1U(x,z)U(z,y)\d z, \ \text{and}\\
 P_3(x,y)&=\int_{[0,1]^2}U(x,z)U(z,w)U(w,y)\d z\d w.
\end{aligned}
\]
A simple application of the Cauchy--Schwarz inequality yields that for
every real \(h\in L^2[0,1]\),
\begin{equation}\label{contraction}
 \int_0^1\left|\int_0^1U(x,y)h(y)\d y\right|^2\d x
 \le \left(\int U(x,y)^2\d x\d y\right)\norm{h}^2
 \le\norm{h}^2.
\end{equation}
In particular,
\begin{equation}\label{basic-bounds}
 d_U^2\le\norm{d}^2,\qquad
 \norm{f}^2\le\norm{d}^2,\qquad
 \norm{P_3}^2\le\norm{P_2}^2.
\end{equation}
The first inequality is Cauchy--Schwarz; the second follows by taking
\(h=d\) in \eqref{contraction}. For the third inequality in \eqref{basic-bounds}, fix \(y\in[0,1]\)
and set \(h_y(z)=P_2(z,y)\). Then
\(P_3(x,y)=\int_0^1 U(x,z)h_y(z)\d z\), so \eqref{contraction} gives
\[
  \int_0^1 |P_3(x,y)|^2\d x
  \le \norm{h_y}^2
  =\int_0^1 |P_2(z,y)|^2\d z.
\]
Integrating over \(y\in[0,1]\) gives
\(\norm{P_3}^2\le \norm{P_2}^2\).

Using the notation of homomorphism density, a graph is common if $$t(H,G)+t(H,\overline G)\geq 2^{1-e(H)}+o(1)$$ for every graph $G$, where $\overline G$ is the complement of $G$ and $o(1)\to 0$ as $v(G) \to +\infty$. By the standard theory of graph limits~\cite{LL12}, to prove a graph is common, it suffices to show
$$t(H,W)+t(H,1-W)\ge2^{1-e(H)}$$ for every graphon $W$. Finally, we introduce a useful even-subgraph expansion when studying common graphs (for example, see~\cite{KL22}). Setting $U=2W-1$, the standard multilinear expansion gives
\begin{equation}
\begin{aligned}
t(H,W)+t(H,1-W)&=\frac{t(H,1+U)+t(H,1-U)}{2^{e(H)}}=2^{1-e(H)}\left(1+\sum_{F\in \mathcal{E}^+(H)}t(F,U)\right),
\end{aligned}
\end{equation}
where $\mathcal{E}^{+}(H)$ is the set of all spanning subgraphs $F$ of $H$ with a positive even number of edges.

\section{The commonality of the graph \(H_3\)}

We prove the following theorem.
\begin{theorem}\label{thm:h3}
For every graphon \(W\),
\[
 t(H_3,W)+t(H_3,1-W)\ge\frac1{32}.
\]
Equality holds for \(W\equiv\tfrac12\). Thus \(H_3\) is common.
\end{theorem}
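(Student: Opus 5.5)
The plan is to start from the even-subgraph expansion of the Preliminaries with \(e(H_3)=6\). It then suffices to show
\[
 \Sigma:=\sum_{F\in\mathcal E^+(H_3)}t(F,U)\ \ge\ 0
\]
for every kernel \(U=2W-1\). Equality at \(W\equiv\tfrac12\) is immediate, since then \(U\equiv0\).

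Label the edges \(a=12\), \(b=13\), \(c=23\), \(d=14\), \(e=45\), \(f=25\). Then \(H_3\) is the triangle \(abc\) glued to the \(4\)-cycle \(adef\) along \(a\), and \(bcfed\) is a \(5\)-cycle. The spanning subgraphs with an even number of edges fall into three groups.

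\emph{Two edges.} The degrees of \(H_3\) are \(3,3,2,2,2\), so there are \(3+3+1+1+1=9\) adjacent pairs, each contributing \(\norm{d}^2\). The remaining \(6\) disjoint pairs each contribute \(d_U^2\). This group gives \(9\norm{d}^2+6d_U^2\ge0\).

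\emph{Four edges.} These are the \(15\) complements of edge pairs. Their types are the \(4\)-cycle \(adef\) (complement of \(\{b,c\}\)), paths with four edges, a path with three edges plus an isolated edge, triangles with a pendant edge, and stars or forests. I will write each density in the notation of the Preliminaries:
\begin{itemize}
\item \(t(C_4,U)=\norm{P_2}^2\);
\item a path with four edges gives \(\norm{f}^2\);
\item a path with three edges plus an isolated edge gives \(d_U\int d f\);
\item a triangle with a pendant edge gives \(\int U(x,y)U(x,z)U(y,z)d(x)\);
\item a star \(K_{1,3}\) plus an edge gives \(d_U\int d^3\).
\end{itemize}
Using \(|U|,|d|,|f|\le1\), Cauchy--Schwarz, and \eqref{basic-bounds}, each term that might be negative will be bounded below by a multiple of \(\norm{d}^2\), \(d_U^2\), or a product of norms such as \(\norm{d}\norm{f}\le\norm{d}^2\). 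For example, for a triangle with a pendant edge, integrating out the pendant vertex gives
\[
\int U(x,y)U(x,z)U(y,z)\,d(x) \ \ge\ -\int |d(x)|\Big(\int |P_2(x,x')|\d x'\Big)\d x,
\]
which I intend to control by \(\norm{d}\) times something bounded. If that is too lossy, I will fall back on \(|\int P_2(x,y)U(x,y)d(x)|\le \norm{P_2}\,\norm{d}\) and an AM--GM split between \(\norm{P_2}^2\) and \(\norm{d}^2\).

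\emph{Six edges.} This is \(H_3\) itself. Integrating out vertices \(3\), \(4\) and \(5\) gives
\[
 t(H_3,U)=\int U(x_1,x_2)\,P_2(x_1,x_2)\,P_3(x_1,x_2)\,\d x_1\d x_2 .
\]
Since \(|U|\le1\), Cauchy--Schwarz and \eqref{basic-bounds} give
\[
 t(H_3,U)\ \ge\ -\norm{P_2}\,\norm{P_3}\ \ge\ -\norm{P_2}^2 ,
\]
so the \(4\)-cycle term exactly absorbs the whole \(H_3\) term.

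It then remains to show that the other \(14\) four-edge terms, together with \(9\norm{d}^2+6d_U^2\), are nonnegative. The nonnegative ones, such as \(\norm{f}^2\), can be kept or dropped as convenient. The main obstacle is the triangle-with-pendant terms and the three-edge-path-plus-edge terms. I expect to pair each such term with part of the \(9\norm{d}^2\) budget via \(2|xy|\le x^2+y^2\), possibly borrowing some of \(\norm{f}^2\). Before writing the estimates I will tally the coefficients carefully, to confirm that the \(9\norm{d}^2\) budget suffices.
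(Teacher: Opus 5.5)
\textbf{There is a genuine gap: the four-cycle term cannot absorb the entire six-edge term.} After $t(H_3,U)\ge-\norm{P_2}\,\norm{P_3}\ge-\norm{P_2}^2$, no part of $\norm{P_2}^2$ is left, so your fallback AM--GM split is unavailable. The two triangle-with-pendant terms $2\int U(x,y)P_2(x,y)d(x)\d x\d y$, together with the triangle-plus-edge term $d_U\int U(x,y)P_2(x,y)\d x\d y$, would then have to be paid for by $9\norm{d}^2+6d_U^2+7\norm{f}^2+\cdots$.

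This cannot work. These terms are linear in the degree data (your own bounds give only $\norm{d}$ or $\norm{P_2}\,\norm{d}$), while the budget is quadratic. Concretely, let $g$ equal $1$, $-1$, $0$ on $[0,\tfrac14)$, $[\tfrac14,\tfrac12)$, $[\tfrac12,1]$ respectively. Let $s=-1$ on $[0,\tfrac12)$ and $s=1$ on $[\tfrac12,1]$. For $0<\epsilon\le\tfrac12$ put $U(x,y)=\tfrac12g(x)g(y)+\tfrac{\epsilon}{2}(s(x)+s(y))$, so $|U|\le1$. Then $d=\tfrac{\epsilon}{2}s$, $d_U=0$ and $f\equiv\epsilon^2/4$, but $\int U(x,y)P_2(x,y)d(x)\d x\d y=-\epsilon/128+O(\epsilon^2)$. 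So the residual inequality you plan to prove is false for small $\epsilon$. All the slack sits in the discarded quantity $\norm{P_2}^2+t(H_3,U)$, which tends to $1/256$ as $\epsilon\to0$.

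\textbf{The missing idea is to treat the six-edge term and the three triangle-containing four-edge terms together.} This is how the paper argues. Their sum is exactly $\int U(x,y)P_2(x,y)A(x,y)\d x\d y$ with $A(x,y)=P_3(x,y)+d(x)+d(y)+d_U$. The pointwise bound $2UP_2A\ge-P_2^2-A^2$ then gives $\norm{P_2}^2+\int UP_2A\ge\tfrac12\bigl(\norm{P_2}^2-\norm{A}^2\bigr)$.

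The gain over your term-by-term estimates is that
$\norm{A}^2-\norm{P_3}^2=4\norm{f}^2+2d_U\int d(x)f(x)\d x+2\norm{d}^2+7d_U^2$
is quadratic in the degree data. This holds because the cross terms $\int P_3(x,y)d(x)\d x\d y=\norm{f}^2$ and $\int P_3(x,y)\d x\d y=\int d(x)f(x)\d x$ are quadratic. Your approach effectively replaces $\norm{A}$ by $\norm{P_3}+\norm{D}$ with $D=d(x)+d(y)+d_U$, and $(\norm{P_3}+\norm{D})^2-\norm{P_3}^2$ contains the linear term $2\norm{P_3}\,\norm{D}$. In the paper's route, only half of the four-cycle term is needed, and it pays for $\norm{P_3}^2$ via $\norm{P_3}\le\norm{P_2}$.

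\textbf{Your census of four-edge subgraphs is also wrong.} A four-edge forest on five vertices is a spanning tree, so ``three-edge path plus an edge'' and ``$K_{1,3}$ plus an edge'' do not occur. The fifteen terms are:
\begin{itemize}
\item seven four-edge paths, each giving $\norm{f}^2$;
\item four trees with degree sequence $3,2,1,1,1$, each giving $\int d(x)^2f(x)\d x$, which can be negative and is only bounded below by $-\norm{d}^2$;
\item one four-cycle;
\item two triangles with a pendant edge;
\item one triangle plus a disjoint edge.
\end{itemize}
Your coefficient tally must be redone with this list.
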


\begin{proof}
Expanding \(W=(1+U)/2\) and \(1-W=(1-U)/2\) gives
\begin{equation}\label{even-expansion}
 32\bigl[t(H_3,W)+t(H_3,1-W)\bigr]
 =1+\sum_{F\in \mathcal{E}^+(H_3)}
       t(F,U).
\end{equation}
There are nine adjacent edge pairs, each forming a two-edge path \(P_2\), since the degrees of \(H_3\) are \(3,3,2,2,2\). The remaining six edge pairs are disjoint. Their total contribution is \(9t(P_2,U)+6t(K_2\sqcup K_2,U)=9\norm{d}^2+6d_U^2,\) where we use the fact $t(K_2\sqcup K_2,U)=t(K_2,U)^2$.

The four-edge terms up to isomorphism are listed below. We still use the labels in Figure 1. In the third column, each
parenthesized pair specifies the two edges deleted from \(H_3\).
Isolated vertices are omitted from the graph descriptions.
\begin{center}
\small
\renewcommand{\arraystretch}{1.35}
\begin{tabular}{@{}l c >{\raggedright\arraybackslash}p{.38\textwidth} l@{}}
\toprule
Remaining graph & Count & Deleted edge pairs & Homomorphism density \\
\midrule
Four-edge path & \(7\) &
\((12,13)\), \((12,23)\), \((12,14)\), \((12,45)\), \((12,25)\),
\((13,25)\), \((23,14)\) & \(\norm{f}^2\) \\
Tree with degree sequence \(3,2,1,1,1\) & \(4\) &
\((13,14)\), \((13,45)\), \((23,45)\), \((23,25)\) & \(\int d(x)^2f(x)\d x\) \\
Four-cycle & \(1\) & \((13,23)\) & \(\norm{P_2}^2\) \\
Triangle with a pendent edge & \(2\) & \((14,45)\), \((45,25)\) &
\(\int U(x,y) P_2(x,y)d(x)\d x\d y\) \\
Disjoint union of a triangle and an edge & \(1\) & \((14,25)\) & \(d_U\int U(x,y)P_2(x,y)\d x\d y\) \\
\bottomrule
\end{tabular}
\end{center}

We explain two of the homomorphism densities in the fourth column. 
For the tree with degree sequence $3,2,1,1,1$, at the degree-three vertex, the two adjacent leaves give \(d(x)^2\) and the remaining two-edge branch gives \(f(x)\).
For a triangle with a pendent edge, integrate the leaf vertex of the pendent edge
and the opposite triangle vertex to obtain \(d(x)U(x,y)P_2(x,y)\).

For the six-edge term, keep vertices \(1,2\) at \(x,y\). The other
three vertices give \(P_2(x,y)P_3(x,y)\), so its homomorphism density is
\(\int U(x,y)P_2(x,y)P_3(x,y)\d x\d y\). 

The triangle terms and the six-edge term may have either sign. To estimate them together, introduce the auxiliary kernel
\[
 A(x,y)=P_3(x,y)+d(x)+d(y)+d_U.
\]

Consequently the expansion \eqref{even-expansion} and the table give the exact identity
\begin{equation}\label{h3-expansion}
\begin{aligned}
 32\bigl[t(H_3,W)+t(H_3,1-W)\bigr]-1&=9\norm{d}^2+6d_U^2+7\norm{f}^2+4\int d(x)^2f(x)\d x+\norm{P_2}^2\\
 &+\int U(x,y)P_2(x,y)A(x,y)\d x\d y.
\end{aligned}
\end{equation}
Because \(|U|\le1\), we have
\(2U(x,y)P_2(x,y)A(x,y)\ge-P_2(x,y)^2-A(x,y)^2\) for every $(x,y)\in[0,1]^2$. It follows that
\begin{equation}\label{h3-square-bound}
 \norm{P_2}^2+\int U(x,y)P_2(x,y)A(x,y)\d x\d y
 \ge\frac12\bigl(\norm{P_2}^2-\norm{A}^2\bigr).
\end{equation}

To expand $||A||_2^2$, note that 
 $\int_0^1P_2(x,y)\d y=f(x),$
 $\int P_3(x,y)\d x\d y=\int d(x)f(x)\d x$
and
\[
 \int d(x)P_3(x,y)\d x\d y=\int d(x)P_2(x,z)d(z)\d x\d z=\int_0^1
   \left(\int_0^1d(x)U(x,w)\d x\right)^2\d w
 =\norm{f}^2.
\]
The same identity holds with \(d(y)\) in place of \(d(x)\). Moreover, 
\[
 \int\bigl(d(x)+d(y)+d_U\bigr)^2\d x\d y
 =2\norm{d}^2+7d_U^2.
\]
Therefore
\begin{equation}\label{h3-A}
 \norm{A}^2
 =\norm{P_3}^2+4\norm{f}^2+2d_U\int d(x)f(x)\d x
       +2\norm{d}^2+7d_U^2.
\end{equation}
Substituting \eqref{h3-square-bound} and \eqref{h3-A} into
\eqref{h3-expansion} yields
\begin{equation}\label{h3-remainder}
\begin{aligned}
 &32\bigl[t(H_3,W)+t(H_3,1-W)\bigr]-1\\
 &\ge8\norm{d}^2+\frac52 d_U^2+5\norm{f}^2+4\int d(x)^2f(x)\d x-d_U\int d(x)f(x)\d x+\frac12\bigl(\norm{P_2}^2-\norm{P_3}^2\bigr).
\end{aligned}
\end{equation}
Here the last term is nonnegative by \eqref{basic-bounds}.
Since \(f(x)\ge-1\) and \(|d_U|\le1\),
\[
 \int d(x)^2f(x)\d x\ge-\norm{d}^2,\qquad
 \left|d_U\int d(x)f(x)\d x\right|
 \le\int|d(x)f(x)|\d x\le\frac12\bigl(\norm{d}^2+\norm{f}^2\bigr).
\]
We conclude from \eqref{h3-remainder} that
\[
\begin{aligned}
 32\bigl[t(H_3,W)+t(H_3,1-W)\bigr]-1
 \ge\frac72\norm{d}^2+\frac52 d_U^2+\frac92\norm{f}^2\ge0.
\end{aligned}
\]
For \(W\equiv\tfrac12\), both densities are \(2^{-6}\), proving
equality.
\end{proof}

\section{The commonality of the graph \(H_4\)}
We prove the following theorem.
\begin{theorem}\label{thm:h4}
For every graphon \(W\),
\[
 t(H_4,W)+t(H_4,1-W)\ge\frac1{64}.
\]
Equality holds for \(W\equiv\tfrac12\). Thus \(H_4\) is common.
\end{theorem}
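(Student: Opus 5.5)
We follow the template of the proof of Theorem~\ref{thm:h3}. By the even-subgraph expansion with \(e(H_4)=7\),
\[
64\bigl[t(H_4,W)+t(H_4,1-W)\bigr]-1=\sum_{F\in\mathcal E^+(H_4)}t(F,U),
\]
where \(F\) ranges over spanning subgraphs with \(2\), \(4\) or \(6\) edges. The degrees of \(H_4\) are \(4,4,2,2,2\) (vertices \(1,2\) have degree \(4\); vertices \(3,4,5\) have degree \(2\)). Hence the number of adjacent edge pairs is \(2\binom42+3\binom22=15\), and the remaining \(\binom72-15=6\) pairs are disjoint. The two-edge part is therefore \(15\norm{d}^2+6d_U^2\), a large positive reservoir.

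There are \(\binom74=35\) four-edge subgraphs. We classify them as paths, trees, four-cycles, triangles with a pendent edge, and triangle plus a disjoint edge. Each density is expressed in terms of \(d\), \(f\), \(P_2\), \(U\) exactly as in the table for \(H_3\).

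\begin{itemize}
\item Nonnegative terms: four-edge paths \(\norm{f}^2\), stars \(\int d^4\), \(C_4\) terms \(\norm{P_2}^2\).
\item Sign-indefinite terms: trees \(\int d^2 f\), and triangle-type terms \(\int U P_2\,d(x)\) and \(d_U\int UP_2\).
\end{itemize}

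The six-edge subgraphs are obtained by deleting one edge, giving seven terms.
\begin{itemize}
\item Deleting \(12\) gives \(K_{2,3}\), with density \(\int P_2(x,y)^3\,\d x\,\d y\).
\item Deleting an edge from \(\{1,2\}\) to \(\{4,5\}\) gives a graph whose density, with \(1,2\) fixed at \(x,y\), is of the form \(\int U(x,y)P_2(x,y)\,(\text{a path term})\). These are analogous to the \(UP_2P_3\) term in the \(H_3\) proof.
\item Deleting an edge at vertex \(3\) gives a \(K_4\)-minus-edge structure with a pendent edge, containing a factor \(d\).
\end{itemize}

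The key structural observation is that, with \(x,y\) placed at the degree-four vertices \(1,2\), the paths through \(3\), \(4\), \(5\) have lengths \(2\), \(1\) and \(1\) in the relevant pieces. So every term containing \(U(x,y)\) or a triangle can be written as \(\int U(x,y)P_2(x,y)B(x,y)\), where \(B\) is a combination of \(P_2\), \(P_3\), \(d(x)\), \(d(y)\) and \(d_U\).

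For the main step, we group all sign-indefinite odd-cycle terms into the single expression \(\int U P_2 B\). We then apply the pointwise bound
\[
2UP_2B\ge-\lambda P_2^2-\lambda^{-1}B^2,
\]
with \(\lambda\) chosen to balance against the available \(\norm{P_2}^2\) coefficient. The available terms are the \(C_4\) count and the \(K_{2,3}\) term. Note that \(\int P_2^3\) can be negative, so it should be absorbed into \(B\) as well, using \(|P_2|\le1\) to write \(P_2^3\ge -P_2^2\) if needed.

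Next we expand \(\norm{B}^2\) using the identities already computed:
\[
\int P_2(x,y)\,\d y=f(x),\qquad \int d(x)P_3(x,y)\,\d x\,\d y=\norm{f}^2,\qquad \int P_3\,\d x\,\d y=\int df .
\]
We then use \(\norm{P_3}\le\norm{P_2}\) from \eqref{basic-bounds} to cancel the \(\norm{P_3}^2\) contribution against the \(P_2\) terms. The leftover mixed terms \(\int d^2f\) and \(d_U\int df\) are bounded as in Section~3, via \(f\ge-1\) and AM--GM.

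The main obstacle is the coefficient bookkeeping for \(\norm{P_2}^2\). In \(H_4\) there are several four-cycles: \(1\!-\!4\!-\!2\!-\!5\), and also \(1\!-\!3\!-\!2\) combined with the edges to \(4\) or \(5\). There are also several triangle terms and the \(K_{2,3}\) term. A single Cauchy--Schwarz with one \(\lambda\) may not close, so we would first try splitting \(B\) into a \(P_2\)-part and a degree part and using separate weights. We would check that the final quadratic form in \((\norm{d}^2,d_U^2,\norm{f}^2,\norm{P_2}^2-\norm{P_3}^2)\) has nonnegative coefficients; the large \(15\norm{d}^2\) slack is what should make this work. Equality at \(W\equiv\tfrac12\) is immediate since \(U\equiv0\).
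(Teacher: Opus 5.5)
Your central step fails, and the bookkeeping behind it is for the wrong graph. In $H_4$ (edges $12,14,24,34,15,25,35$) the degrees are $3,3,2,3,3$: vertices $1,2,4,5$ span $K_4$ minus $45$, and $3$ subdivides $45$. The sequence $4,4,2,2,2$ you use belongs to the book $K_2+\overline{K_3}$.

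Counted correctly, $H_4$ has $13$ adjacent and $8$ disjoint edge pairs, no $K_{1,4}$, $14$ Hamiltonian paths and $10$ spiders, and three $4$-cycles $1425,1435,2435$. It has six triangle-plus-pendant and two triangle-plus-disjoint-edge subgraphs. Its six-edge subgraphs are four houses $H_4-e$ for $e\in\{14,24,15,25\}$, two copies of $K_4^-$ with a pendant edge, and $K_{2,3}=H_4-12$. The even expansion is therefore
\[
13\norm{d}^2+8d_U^2+14\norm{f}^2+10\int d^2f+3\norm{P_2}^2+6\int UP_2\,d(x)+2d_U\int UP_2+4\int UP_2P_3+2\int UP_2T+\int P_2^3,
\]
with $T(x,y)=\int U(x,w)U(y,w)d(w)\d w$ and two-variable integrands evaluated at $(x,y)$.

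The whole $\norm{P_2}^2$ budget is therefore $3$. The house terms alone, bounded via $4|UP_2P_3|\le 2\lambda P_2^2+\frac{2}{\lambda}P_3^2$ and $\norm{P_3}\le\norm{P_2}$, already cost $(2\lambda+2/\lambda)\norm{P_2}^2\ge 4\norm{P_2}^2$. Splitting $B$ with separate weights cannot repair an overdraft in its $P_3$-part, and the $\norm{d}^2$ slack does not help. Take $U=s\otimes s$ with $s=\pm1$, $\int s=0$ (two disjoint half-cliques). Then $d\equiv0$, $T\equiv0$ and $\norm{P_2}=\norm{P_3}=1$, so your chain of inequalities yields at best $3-2\lambda-2/\lambda\le-1$.

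Two ideas are missing.

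First, $\int P_2^3$ is not sign-indefinite. Since $4$ and $5$ are exactly the common neighbours of $1,2,3$, it equals $\int C(x,y,z)^2$ with $C(x,y,z)=\int U(x,w)U(y,w)U(z,w)\d w$. It is precisely the square you need; weakening it to $-\norm{P_2}^2$ would shrink your budget to $2$. The paper places $x,y,z$ at $1,2,3$, uses $t(H_4,W)=\int W(x,y)C_W(x,y,z)^2$, and sets $A=P_2(x,y)+P_2(x,z)+P_2(y,z)$. Then $\int A^2=3\norm{P_2}^2+6\norm{f}^2$. The houses and the $K_4^-$-plus-pendant terms are exactly the cross term $2\int U(x,y)AC$. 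So the pointwise inequality $A^2+C^2+2U(x,y)AC\ge0$ in three variables absorbs every top-order term with no loss. Collapsing to a two-variable Cauchy--Schwarz in $(x,y)$ destroys this pairing.

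Second, the six terms $\int UP_2\,d(x)$ are only $O(\norm{d})$, and AM--GM against $P_2^2$ is unavailable once that budget is spent. The paper keeps them inside $Q=\int(A+UB)^2+(1-U^2)B^2$, where $B=C+\frac{3}{2}(d(x)+d(y))+d(z)$. It then applies the shifted square $\int X^2\ge\frac{2\norm{d}}{3}\int X-\frac{\norm{d}^2}{9}$ to $X=A+UB$, whose mean is at least $6\norm{d}^2-\norm{d}$. This produces $4\norm{d}^3-\frac{7}{9}\norm{d}^2$. The cubic term cancels the $-4\norm{d}^3$ coming from the $-4\int d^2f$ that remains after $Q$ is split off.
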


\begin{proof}
Place vertices \(1,2,3\) at \(x,y,z\) and define
\[
\begin{aligned}
 A(x,y,z)&=P_2(x,y)+P_2(x,z)+P_2(y,z),\\
 B(x,y,z)&=\int U(x,w)U(y,w)U(z,w)\d w
                  +\frac32(d(x)+d(y))+d(z).
\end{aligned}
\]

To explain the expansion below, fix \(x,y,z\) and define the
common-neighbour factors
\[
\begin{aligned}
 C_W(x,y,z)&=\int W(x,w)W(y,w)W(z,w)\d w,\\
 C_{1-W}(x,y,z)&=\int (1-W(x,w))(1-W(y,w))(1-W(z,w))\d w.
\end{aligned}
\]
Since vertices \(4,5\) are precisely the common neighbours of
\(1,2,3\) in \(H_4\), we have
\[
\begin{aligned}
 t(H_4,W)=\int W(x,y)\,C_W(x,y,z)^2\d x\d y\d z,\quad  t(H_4,1-W)=\int (1-W(x,y))\,C_{1-W}(x,y,z)^2\d x\d y\d z.
\end{aligned}
\]
Now expand \(C_W\) using \(W=(1+U)/2\):
\[
\begin{aligned}
 C_W
 &=\frac18\int (1+U(x,w))(1+U(y,w))(1+U(z,w))\d w\\
 &=\frac18\Bigl(
 1+d(x)+d(y)+d(z)
 +P_2(x,y)+P_2(x,z)+P_2(y,z)+\int U(x,w)U(y,w)U(z,w)\d w
 \Bigr).
\end{aligned}
\]
With \(A(x,y,z),B(x,y,z)\) as defined above, this becomes
\[
 C_W=\frac18\left(1+A(x,y,z)+B(x,y,z)-\frac{d(x)+d(y)}2\right).
\]
Similarly, replacing \(U\) by \(-U\) gives
\[
 C_{1-W}=\frac18\left(1+A(x,y,z)-B(x,y,z)+\frac{d(x)+d(y)}2\right).
\]
Therefore,
\[
\begin{aligned}
 64\bigl[t(H_4,W)+t(H_4,1-W)\bigr]&=\frac12\int\Bigl[(1+U(x,y))
       \Bigl(1+A(x,y,z)+B(x,y,z)-\frac{d(x)+d(y)}2\Bigr)^2\\
       &+(1-U(x,y))
       \Bigl(1+A(x,y,z)-B(x,y,z)+\frac{d(x)+d(y)}2\Bigr)^2\Bigr]\d x\d y\d z,
\end{aligned}
\]
which gives
\begin{equation}\label{h4-expansion}
\begin{aligned}
 64\bigl[t(H_4,W)+t(H_4,1-W)\bigr]&=\int\Bigl[(1+A(x,y,z))^2+\Bigl(B(x,y,z)-\frac{d(x)+d(y)}2\Bigr)^2\\
 &+2U(x,y)(1+A(x,y,z))
       \Bigl(B(x,y,z)-\frac{d(x)+d(y)}2\Bigr)\Bigr]\d x\d y\d z.
\end{aligned}
\end{equation}

Expanding these squares and collecting terms explicitly yields
\begin{equation}\label{h4-polynomial}
\begin{aligned}
 &64\bigl[t(H_4,W)+t(H_4,1-W)\bigr]-1\\
&=\int\bigl[A(x,y,z)^2+B(x,y,z)^2+2U(x,y)A(x,y,z)B(x,y,z)\bigr]\d x\d y\d z\\
 &+\int\Bigl[2A(x,y,z)+2U(x,y)B(x,y,z)-B(x,y,z)(d(x)+d(y))\\
 &-U(x,y)(d(x)+d(y))-U(x,y)A(x,y,z)(d(x)+d(y))+\frac{(d(x)+d(y))^2}{4}\Bigr]\d x\d y\d z.
\end{aligned}
\end{equation}

We then deal with the second integral. Direct integrations yield
 $\int A(x,y,z)\d x\d y\d z=3\norm{d}^2,$ 
 $\int U(x,y)(d(x)+d(y))\d x\d y=2\norm{d}^2$ and
 $\int(d(x)+d(y))^2\d x\d y=2\norm{d}^2+2d_U^2.$
Integrating \(y,z\) first shows that
\[
 \int d(x)\left(\int U(x,w)U(y,w)U(z,w)\d w\right)\d x\d y\d z
 =\int d(x)^2f(x)\d x,
\]
and hence
\[
 \int B(x,y,z)(d(x)+d(y))\d x\d y\d z=2\int d(x)^2f(x)\d x+3\norm{d}^2+5d_U^2.
\]
We also have
\begin{equation}\label{h4-triangle}
\begin{aligned}
 &\int U(x,y)\left(\int U(x,w)U(y,w)U(z,w)\d w\right)
\d x\d y\d z=\int U(x,y)P_2(x,y)\frac{d(x)+d(y)}2\d x\d y,
\end{aligned}
\end{equation}
To justify this identity, integrating out \(z\) gives $\int U(x,y)U(x,w)U(y,w)d(w)\d x\d y\d w.$
The triangle factor is invariant under permutations of \(x,y,w\).
Relabeling \(w\) as \(x\), and then averaging the equivalent expressions
with \(d(x)\) and \(d(y)\), gives \eqref{h4-triangle}.
For the remaining terms, use \(\int P_2(x,z)\d z=f(x)\) and
\(\int U(x,y)d(y)\d y=f(x)\) to obtain
\[
\begin{aligned}
 &\int U(x,y)\Bigl(B(x,y,z)-\frac{A(x,y,z)(d(x)+d(y))}{2}\Bigr)\d x\d y\d z\\
 &=3\norm{d}^2+d_U^2
   -\frac12\int U(x,y)\bigl(P_2(x,z)+P_2(y,z)\bigr)(d(x)+d(y))\d x\d y\d z\\
 &=3\norm{d}^2+d_U^2-\int d(x)^2f(x)\d x-\norm{f}^2.
\end{aligned}
\]
Substituting these evaluations into \eqref{h4-polynomial}, the
remainder is
\[
\begin{aligned}
 &6\norm{d}^2
 +2\Bigl(3\norm{d}^2+d_U^2-\int d(x)^2f(x)\d x-\norm{f}^2\Bigr)\\
 &-\Bigl(2\int d(x)^2f(x)\d x+3\norm{d}^2+5d_U^2\Bigr)
       -2\norm{d}^2+\frac12\bigl(\norm{d}^2+d_U^2\bigr)\\
 &=\frac{15}2\norm{d}^2-\frac52 d_U^2-2\norm{f}^2-4\int d(x)^2f(x)\d x.
\end{aligned}
\]
Thus we have obtained the exact identity
\begin{equation}\label{h4-identity}
\begin{aligned}
 &64\bigl[t(H_4,W)+t(H_4,1-W)\bigr]-1\\
&=\int\bigl[A(x,y,z)^2+B(x,y,z)^2+2U(x,y)A(x,y,z)B(x,y,z)\bigr]\d x\d y\d z\\
&+\frac{15}2\norm{d}^2-\frac52 d_U^2-2\norm{f}^2-4\int d(x)^2f(x)\d x.
\end{aligned}
\end{equation}

We now estimate the right-hand side of \eqref{h4-identity}. Let
\[
Q:=\int_{[0,1]^3}
 \bigl(A(x,y,z)^2+B(x,y,z)^2
       +2U(x,y)A(x,y,z)B(x,y,z)\bigr)\d x\d y\d z.
\]
By the Cauchy--Schwarz inequality and \(|U|\le 1\),
\[
|f(x)|
=\left|\int_0^1 U(x,y)d(y)\d y\right|
\le
\left(\int_0^1 U(x,y)^2\d y\right)^{1/2}\norm{d}
\le \norm{d}
\]
for every $x\in[0,1]$.
Consequently,
\[
\int_0^1 d(x)^2f(x)\d x
\le \norm{d}\int_0^1 d(x)^2\d x
=\norm{d}^3.
\]
Together with \(d_U^2\le \norm{d}^2\) and \(\norm{f}^2\le \norm{d}^2\),
the exact identity \eqref{h4-identity} therefore yields
\begin{align*}
&64\bigl[t(H_4,W)+t(H_4,1-W)\bigr]-1=Q+\frac{15}{2}\norm{d}^2-\frac52 d_U^2
     -2\norm{f}^2-4\int_0^1 d(x)^2f(x)\d x\\
&\quad\ge Q+3\norm{d}^2-4\norm{d}^3.
\end{align*}

We next obtain a lower bound for \(Q\).
Since \(|U|\le 1\),
\begin{align*}
Q
&=\int_{[0,1]^3}
  \bigl((A(x,y,z)+U(x,y)B(x,y,z))^2
        +(1-U(x,y)^2)B(x,y,z)^2\bigr)
  \d x\d y\d z\\
&\ge \int_{[0,1]^3}(A(x,y,z)+U(x,y)B(x,y,z))^2\d x\d y\d z.
\end{align*}
Moreover, the definitions of $A(x,y,z)$ and $B(x,y,z)$ and \eqref{h4-triangle} give
\[
\int_{[0,1]^3}(A(x,y,z)+U(x,y)B(x,y,z))\d x\d y\d z
=
6\norm{d}^2+d_U^2+
\int_{[0,1]^2}U(x,y)P_2(x,y)d(x)\d x\d y.
\]
Using \(|U|,|P_2|\le 1\) and Cauchy--Schwarz, we have
\[
\left|
\int_{[0,1]^2}U(x,y)P_2(x,y)d(x)\d x\d y
\right|
\le \int_0^1 |d(x)|\d x
\le \norm{d}.
\]
Thus
\[
\int_{[0,1]^3}(A(x,y,z)+U(x,y)B(x,y,z))\d x\d y\d z
\ge 6\norm{d}^2-\norm{d}.
\]

Completing the square, and using \(\norm{d}\ge 0\), now gives
\begin{align*}
Q
&\ge \int_{[0,1]^3}(A(x,y,z)+U(x,y)B(x,y,z))^2\d x\d y\d z\\
&=\int_{[0,1]^3}
   \left(A(x,y,z)+U(x,y)B(x,y,z)-\frac{\norm{d}}{3}\right)^2
   \d x\d y\d z\\
&\qquad+\frac{2\norm{d}}{3}
  \int_{[0,1]^3}(A(x,y,z)+U(x,y)B(x,y,z))\d x\d y\d z
  -\frac{\norm{d}^2}{9}\\
&\ge \frac{2\norm{d}}{3}(6\norm{d}^2-\norm{d})-\frac{\norm{d}^2}{9}=4\norm{d}^3-\frac79\norm{d}^2.
\end{align*}
Combining the two estimates, we conclude that
\[
64\bigl[t(H_4,W)+t(H_4,1-W)\bigr]-1
\ge \frac{20}{9}\norm{d}^2
\ge 0.
\]
For \(W\equiv \tfrac12\), both densities equal \(2^{-7}\),
so equality holds in the desired commonality inequality.
\end{proof}

\section{Concluding Remarks}
The proofs also yield the stronger lower bounds $t(H_3,W)+t(H_3,1-W)\geq \frac{1}{32}+\frac{7}{64}\|d\|_2^2$ and $t(H_4,W)+t(H_4,1-W)\geq \frac{1}{64}+\frac{5}{144}\|d\|_2^2$. The arguments rely on explicit integral identities, the Cauchy--Schwarz inequality, and completion of squares, and can be verified without flag algebra machinery. Nevertheless, some of the positivity arguments are related to the positive semidefinite and sum-of-squares ideas underlying the flag algebra method. Our proofs express these ideas directly through elementary inequalities for graphon integrals.

The even-subgraph expansion method may become less effective for graphs with more edges, as the number of terms grows rapidly and their potentially negative contributions become harder to control. An interesting question, also raised by Joonkyung Lee in a talk, is whether the commonality of the $5$-wheel $W_5$ admits a proof without flag algebras. Our attempts to extend the present approach to this graph have so far been unsuccessful.

\section*{AI Declaration.}
The proofs in this paper were inspired by the author's work with ChatGPT-5.5 in August on counting specific two-colored subgraphs in large two-edge-colored complete graphs (the semi-inducibility problem). During this exploration, GPT-5.5 produced short flag-algebra-free proofs of upper bounds on the densities of specific two-colored subgraphs, using techniques such as the Cauchy--Schwarz inequality. During the preparation of this paper, the author used ChatGPT-6.1 to help check the expansions. The author's original proof for \(H_4\) involved case distinctions according to the value of \(\|d\|_2\). ChatGPT-6.1 suggested a more concise treatment of this step, which the author adopted after further discussion with the model. AI was also used to assist with language editing, correct typographical errors, and generate Figure 1. The author takes full responsibility for the correctness of the paper.


\begin{thebibliography}{99}
\small

\bibitem{BMN22}
N. Behague, N. Morrison, and J. A. Noel, \newblock{Common pairs of graphs}, \newblock{\emph{Combin. Probab. Comput.}} \textbf{34(5)} (2025), 649--670.

\bibitem{BMN2024}
N. Behague, N. Morrison, and J. A. Noel,
\emph{Off-diagonal commonality of graphs via entropy},
SIAM J. Discrete Math. \textbf{38} (2024), no.~3, 2335--2360.
%\bibitem{BCLSV12}
%C. Borgs, J. T. Chayes, L. Lov{\'a}sz, V. T. S{\'o}s, and K. Vesztergombi, \newblock{Convergent sequences of dense graphs II. Multiway cuts and statistical physics}, \newblock{\emph{Ann. Math.}} \textbf{176(1)} (2012), 151--219.

%\bibitem{BR65}
%G. R. Blakley and P. Roy, \newblock{A H{\"o}lder type inequality for symmetric matrices with nonnegative entries}, \newblock{\emph{Proc. Amer. Math. Soc.}} \textbf{16(6)} (1965), 1244--1245.

\bibitem{BR80}
S. A. Burr and V. Rosta, \newblock{On the Ramsey multiplicities of graphs---problems and recent results}, \newblock{\emph{J. Graph Theory}} \textbf{4(4)} (1980), 347--361.

%\bibitem{CFS10}
%D. Conlon, J. Fox, and B. Sudakov, \newblock{An approximate version of Sidorenko's conjecture}, \newblock{\emph{Geom. Funct. Anal.}} \textbf{20(6)} (2010), 1354--1366.

%\bibitem{CKLL18}
%D. Conlon, J. H. Kim, C. Lee, and J. Lee, \newblock{Some advances on Sidorenko's conjecture}, \newblock{\emph{J. London Math. Soc.}} \textbf{98(3)} (2018), 593--608.

%\bibitem{CL17}
%D. Conlon and J. Lee, \newblock{Finite reflection groups and graph norms}, \newblock{\emph{Adv. Math.}} \textbf{315} (2017), 130--165.

%\bibitem{CL21}
%D. Conlon and J. Lee, \newblock{Sidorenko's conjecture for blow-ups}, \newblock{\emph{Discrete Anal.}} (2021), Paper No. 2, 13 pp.

%\bibitem{CHL19}
%E. Cs{\'o}ka, T. Hubai, and L. Lov{\'a}sz, \newblock{Locally common graphs}, \newblock{\emph{J. Graph Theory}} \textbf{102(3)} (2023), 472--483.

%\bibitem{CY11}
%J. Cummings and M. Young, \newblock{Graphs containing triangles are not 3-common}, \newblock{\emph{J. Comb.}} \textbf{2(1)} (2011), 1--14.

\bibitem{E62}
P. Erd{\H{o}}s, \newblock{On the number of complete subgraphs contained in certain graphs}, \newblock{\emph{Magyar Tud. Akad. Mat. Kutat{\'o} Int. K{\"o}zl}} \textbf{7} (1962), 459--464.

%\bibitem{F08}
%J. Fox, \newblock{There exist graphs with super-exponential Ramsey multiplicity constant}, \newblock{\emph{J. Graph Theory}} \textbf{57(2)} (2008), 89--98.

%\bibitem{FW17}
%J. Fox and F. Wei, \newblock{On the local approach to Sidorenko's conjecture}, \newblock{\emph{Electron. Notes Discrete Math.}} \textbf{61} (2017), 459--465.

%\bibitem{HKKV22}
%R. Hancock, D. Kr{\'a}l', M. Krnc, and J. Volec, \newblock{Toward characterizing locally common graphs}, \newblock{\emph{Random Structures Algorithms}} \textbf{62(1)} (2023), 181--218.

%\bibitem{FW22}
%J. Fox and Y. Wigderson, \newblock{Ramsey multiplicity and the Tur{\'a}n coloring}, \newblock{\emph{Adv. Combin.}} (2023), Paper No. 2, 39 pp.

\bibitem{G59}
A. W. Goodman, \newblock{On sets of acquaintances and strangers at any party}, \newblock{\emph{Amer. Math. Monthly}} \textbf{66} (1959), 778--783.

\bibitem{GLLV}
A. Grzesik, J. Lee, B. Lidick{\'y}, and J. Volec, \newblock{On tripartite common graphs}, \newblock{\emph{Combin. Probab. Comput.}} \textbf{31(5)} (2022), 907--923.

%\bibitem{H}
%R. Halin, \newblock{S-functions for graphs}, \newblock{\emph{J. Geom.}} \textbf{8} (1976), 171--186.

\bibitem{HHNRK12}
H. Hatami, J. Hladk{\'y}, D. Kr{\'a}l', S. Norine, and A. A. Razborov, \newblock{Non-three-colourable common graphs exist}, \newblock{\emph{Combin. Probab. Comput.}} \textbf{21(5)} (2012), 734--742.

%\bibitem{H10}
%H. Hatami, \newblock{Graph norms and Sidorenko's conjecture}, \newblock{\emph{Israel J. Math.}} \textbf{175(1)} (2010), 125--150.

\bibitem{JST96}
C. Jagger, P. {\v S}{\v t}ov{\'\i}{\v c}ek, and A. Thomason, \newblock{Multiplicities of subgraphs}, \newblock{\emph{Combinatorica}} \textbf{16(1)} (1996), 123--141.

\bibitem{KKL26}
D. Kr{\'a}l', M. Krnc, and A. Lamaison, \newblock{Ramsey multiplicity of apices of trees}, \newblock{\emph{SIAM J. Discrete Math.}} \textbf{40(2)} (2026), 889--896.


\bibitem{KL22}
J. S. Kim and J. Lee, \newblock{Extended commonality of paths and cycles via Schur convexity}, \newblock{\emph{J. Combin. Theory Ser. B}} \textbf{166} (2024), 109--122.

\bibitem{KoL22}
S. Ko and J. Lee, \newblock{Common graphs with arbitrary connectivity and chromatic number}, \newblock{\emph{J. Combin. Theory Ser. B}} \textbf{162} (2023), 223--230.

\bibitem{KNNVW22}
D. Kr{\'a}l', J. A. Noel, S. Norin, J. Volec, and F. Wei, \newblock{Non-bipartite $k$-common graphs}, \newblock{\emph{Combinatorica}} \textbf{42(1)} (2022), 87--114.

\bibitem{KVW}
D. Kr{\'a}l', J. Volec, and F. Wei, \newblock{Common graphs with arbitrary chromatic number}, \newblock{\emph{Compos. Math.}} \textbf{161(3)} (2025), 594--634.

\bibitem{LN23}
J.-b. Lee and J. A. Noel, \newblock{Disconnected common graphs via supersaturation}, \newblock{\emph{Electron. J. Combin.}} \textbf{33(2)} (2026), Paper No. P2.1.

%\bibitem{JS21}
%J. Lee and B. Sch{\"u}lke, \newblock{Convex graphon parameters and graph norms}, \newblock{\emph{Israel J. Math.}} \textbf{242(2)} (2021), 549--563.

%\bibitem{LS11}
%X. Li and B. Szegedy, \newblock{On the logarithmic calculus and Sidorenko's conjecture}, \newblock{\emph{Acta Math. Hungar.}} \textbf{177} (2025), 450--463.

%\bibitem{LL11}
%L. Lov{\'a}sz, \newblock{Subgraph densities in signed graphons and the local Simonovits--Sidorenko conjecture}, \newblock{\emph{Electron. J. Combin.}} \textbf{18(1)} (2011), Paper No. P127, 21 pp.

\bibitem{LL12}
L. Lov{\'a}sz, \newblock{\emph{Large Networks and Graph Limits}}, Amer. Math. Soc. Colloq. Publ. \textbf{60}, American Mathematical Society, Providence, RI, 2012.

\bibitem{OAI26}
OpenAI, \newblock{A counterexample to Sidorenko's conjecture}, \newblock{\emph{OpenAI Math Release preprint}} (2026).

\bibitem{R16}
S. Raghuvanshi, \newblock{New results on Ramsey multiplicity and graph commonality}, Research Science Institute, MIT, 2016.

%\bibitem{RS84}
%Neil Robertson and Paul Seymour, \newblock{Graph minors \uppercase\expandafter{\romannumeral3}: Planar tree-width}, \newblock{\emph{J. Combin. Theory Ser. B}} \textbf{36} (1984), 49--64.

\bibitem{R07}
A. A. Razborov, \newblock{Flag algebras}, \newblock{\emph{J. Symbolic Logic}} \textbf{72(4)} (2007), 1239--1282.

\bibitem{S89}
A. Sidorenko, \newblock{Cycles in graphs and functional inequalities}, \newblock{\emph{Math. Notes}} \textbf{46(5)} (1989), 877--882.

\bibitem{S93}
A. Sidorenko, \newblock{A correlation inequality for bipartite graphs}, \newblock{\emph{Graphs Combin.}} \textbf{9(2-4)} (1993), 201--204.

%\bibitem{S931}
%A. Sidorenko, \newblock{Inequalities for functionals generated by bipartite graphs}, \newblock{\emph{Discrete Math. Appl.}} \textbf{2(5)} (1992), 489--504.

%\bibitem{S96}
%A. Sidorenko, \newblock{Randomness friendly graphs}, \newblock{\emph{Random Structures Algorithms}} \textbf{8(3)} (1996), 229--241.

%\bibitem{Sz15}
%B. Szegedy, \newblock{An information theoretic approach to Sidorenko's conjecture}, \newblock{\emph{arXiv:1406.6738}}.

\bibitem{T89}
A. Thomason, \newblock{A disproof of a conjecture of Erd{\H{o}}s in Ramsey theory}, \newblock{\emph{J. London Math. Soc.}} (2) \textbf{39(2)} (1989), 246--255.
\end{thebibliography}
\end{document}